\theoremstyle{plain} 
\newtheorem{tw}{Theorem}[section]	
\newtheorem{lemma}{Lemma}[section]
\theoremstyle{definition} 
\newcommand\cA{{\mathcal A}}
\newcommand\cF{{\mathcal F}}
\newcommand\cS{{\mathcal S}}
\newcommand\cM{{\mathcal M}}
\newcommand\fS{{\textfrak{S}}}
\newcommand{\rS}{\mathrm{S}}
\newcommand{\rM}{\mathrm{M}}
\newcommand{\rL}{\mathrm{L}}
\newcommand{\rP}{\mathrm{P}}
\newcommand\bIs{\mathbf{I}_{\rS}}
\newcommand\bI{\mathbf{I}}
\renewcommand\ge{\geqslant}
\renewcommand\le{\leqslant}
\newcommand{\mI}[1]{\mathbbm{1}_{#1}}
\title{An answer to an open problem \\ on seminormed fuzzy integral}
\author{Michał Boczek, Marek Kaluszka\footnote{Corresponding author. E-mail adress: kaluszka@p.lodz.pl; tel.: +48 42 6313859; fax.: +48 42 6363114.}\\ {\emph{
\small{Institute of Mathematics, Lodz University of Technology, 90-924 Lodz, Poland}}}}
\date{}
\begin{document}
\maketitle


\begin{abstract} 
We give an~answer to  Problem $9.3$ stated by by Mesiar and Stup\v{n}anov\'{a} \cite{mesiar11}. We show that  
the class of semicopulas solving this problem contains any associative semicopula $\rS$ such that for each $a\in [0,1]$ the function  $x\mapsto \rS(a,x)$ is continuous and increasing on a countable number of intervals.
\end{abstract}

{\it Keywords: }{Generalized Sugeno integral; Seminorm; Capacity; Monotone measure, Semicopula; Fuzzy integrals.}

\section{Introduction}
Let $(X,\cA)$ be a~measurable space, where $\cA$ is a~$\sigma$-algebra of subsets of a~non-empty set $X,$ and let $\cS$ be the family of all measurable spaces. The~class of all $\cA$-measurable functions $f\colon X\to [0,1]$ is denoted by $\cF_{(X,\cA)}.$ A~{\it capacity} on $\cA$  is a~non-decreasing set function 
$\mu\colon \cA\to [0,1]$ with $\mu(\emptyset)=0$ and  $\mu(X)=1.$
We denote by $\cM_{(X,\cA)}$ the class of all 
capacities  on $\cA.$  

Suppose $\rS\colon [0,1]^2\to [0,1]$ is  a~non-decreasing function in both coordinates with the neutral element equal to $1$, called 
a~{\it semicopula} or  
a~$t$-{\it seminorm} (see $\cite{bas,dur}$).  It is clear that $\rS(x,y)\le x\wedge y$ 
and $\rS(x,0)=0=\rS(0,x)$ for all $x,y\in [0,1]$.  We denote the class of all semicopulas
by $\fS.$ Typical examples of semicopulas include: 
$\rM(a,b)=a\wedge b,$ $\Pi(a,b)=ab,$ $\rS(x,y)=xy(x\vee y)$ and $\rS _L(a,b)=(a+b-1)\vee 0;$
$\rS_L$ is called the {\it Łukasiewicz t-norm}  $\cite{klement2}.$ Hereafter, 
$a\wedge b=\min(a,b)$ and $a\vee b=\max(a,b)$.

The~generalized Sugeno integral is defined by
\begin{align*}
\bIs(\mu,f):=\sup_{t\in [0,1]} \rS\Big(t,\mu\big(\lbrace f\ge t\rbrace \big)\Big),
 \end{align*} 
where $\lbrace f\ge t\rbrace=\lbrace x\in X\colon f(x)\ge t\rbrace,$  $(X,\cA)\in \cS$ and $(\mu, f)\in \cM_{(X,\cA)}\times \cF_{(X,\cA)}.$ In the literature the functional $\bIs$ is also called {\it seminormed fuzzy integral} $\cite{suarez,klement3,ouyang3}.$
Replacing semicopula $\rS$ with $\rM$, we get the {\it Sugeno integral} $\cite{sugeno1}$. Moreover, if  $\rS=\Pi,$ then $\bI_{\Pi}$ is called the {\it Shilkret integral} $\cite{shilkret}.$

Below we present Problem $9.3$ from $\cite{mesiar11},$ which was posed by Hutník  during the conference FSTA 2014
\textit{The Twelfth International Conference on Fuzzy Set Theory and Applications} held from January 26 to January 31, 2014 in Liptovsk\'y J\'an, Slovakia. 
\medskip

{\bf Problem ${\bf 9.3}$} {\it
To characterize a~class of semicopulas $\rS$ for which the property
\begin{align}\label{r1}
\Big(\forall_{a\in [0,1]}\Big)\quad \quad \bIs\big(\mu,\rS(a,f)\big)=\rS\big(a,\bIs (\mu,f)\big)
\end{align}
holds for all $(X, \cA)\in \cS$ and all $(\mu, f)\in\cM_{(X,\cA)}\times\cF_{(X,\cA)}.$
}
\medskip

Hutník et al. $\cite{hutnik1,mesiar11}$ conjecture that the class of semicopulas solving Problem $9.3$ will contain only the (semi)copulas $\rM$ and $\Pi.$ We show that this conjecture is false, that is, the property $\eqref{r1}$ is satisfied by
any associative semicopula with continuous selections which satisfy  some mild conditions. 
 
 \section{Main results}

Let $\fS_0$ denote the set  of all  semicopulas $\rS$ which fulfill the following two conditions:
\begin{itemize}
\item[(C1)] $\rS$ is  associative, i.e. $\rS\big(\rS(x,y),z\big)=\rS(x,\rS(y,z)\big)$ for all $x,y,z\in [0,1],$
\item[(C2)]  $[0,1]\ni x\mapsto \rS(a,x)$ is continuous  for each $a\in (0,1).$
\end{itemize}
The class  $\fS_0$ is  non-empty, e.g.,  $\rM,\Pi,\rS_L\in \fS_0.$ We  prove that the property $\eqref{r1}$ implies that $\rS\in \fS_0.$

\begin{tw}\label{tw1} If the equality $\eqref{r1}$
holds for all $(X, \cA)\in \cS$ and all $(\mu, f)\in\cM_{(X,\cA)}\times\cF_{(X,\cA)},$  then $\rS\in \fS_0.$
\end{tw}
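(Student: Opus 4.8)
The plan is to prove the contrapositive structure: assume (\ref{r1}) holds universally and deduce both (C1) associativity and (C2) continuity of the selections. The strategy throughout is to exploit the universal quantifier over all measurable spaces $(X,\cA)$ and all pairs $(\mu,f)$ by plugging in cleverly chosen, very simple data — essentially finite spaces with indicator-like functions — that force $\bIs$ to collapse into elementary expressions in $\rS$. The key computational fact I would establish first is a reduction formula: for a two-valued (or finitely-valued) function $f$ built from a partition of $X$, the integral $\bIs(\mu,f)$ reduces to a finite supremum of values $\rS(t_i,\mu(A_i))$. Since $\rS(a,f)$ has the same level-set structure as $f$ (the map $t\mapsto\rS(a,t)$ is monotone), the left side $\bIs(\mu,\rS(a,f))$ becomes $\sup_i \rS(a,\rS(t_i,\mu(A_i)))$ while the right side is $\rS(a,\sup_i \rS(t_i,\mu(A_i)))$.

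Let me turn to associativity (C1). First I would choose $X$ with a single measurable "atom" $A$ where the capacity can be tuned, and construct $f$ so that $\bIs(\mu,f)$ equals exactly $\rS(y,z)$ for freely chosen $y,z\in[0,1]$. Concretely, take $f=\mI{A}\cdot y$ (the constant $y$ on $A$, zero elsewhere) with $\mu(A)=z$; then $\bIs(\mu,f)=\rS(y,z)$ because the relevant level set is $A$ for $t\le y$ and empty above, and $\rS(t,z)$ is maximized at $t=y$ by monotonicity. Substituting this $f$ into (\ref{r1}) with the outer parameter $a=x$, the right-hand side is immediately $\rS(x,\rS(y,z))$. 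For the left-hand side I would compute $\bIs(\mu,\rS(x,f))$: since $\rS(x,f)$ takes value $\rS(x,y)$ on $A$ and $0$ off $A$, the same level-set argument gives $\bIs(\mu,\rS(x,f))=\rS(\rS(x,y),z)$. Equating the two sides yields $\rS(\rS(x,y),z)=\rS(x,\rS(y,z))$ for all $x,y,z$, which is exactly (C1). The one point demanding care is verifying that the supremum in $\bIs(\mu,\mI{A}\cdot y)$ is genuinely attained at (or approached by) $t=y$ and gives $\rS(y,z)$ rather than some larger value — this follows from monotonicity of $\rS$ in its first argument together with the fact that $\mu(\{f\ge t\})$ drops from $z$ to $0$ as $t$ crosses $y$, but it must be written out since $\rS$ need not be left- or right-continuous a priori.

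For continuity (C2), fix $a\in(0,1)$ and suppose, toward a contradiction, that $x\mapsto\rS(a,x)$ has a jump discontinuity at some point $x_0\in(0,1)$; as a monotone function this is the only kind of discontinuity it can have. The idea is to build a capacity and a function whose integral sits exactly at the discontinuity so that the two sides of (\ref{r1}) land on opposite sides of the jump. I would use a function $f$ for which $\bIs(\mu,f)=x_0$ can be arranged while simultaneously the inner integral $\bIs(\mu,\rS(a,f))$ equals a one-sided limit $\rS(a,x_0^-)$ or $\rS(a,x_0^+)$ that differs from the value $\rS(a,x_0)$ appearing on the right-hand side. This is where the freedom to choose the capacity matters: by selecting $\mu$ so that the level-set measure $\mu(\{f\ge t\})$ sweeps continuously (or approaches $x_0$ only in the limit), one forces the supremum defining $\bIs(\mu,\rS(a,f))$ to pick up the one-sided limit of $\rS(a,\cdot)$ rather than its value at $x_0$. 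The main obstacle, and the part I expect to require the most care, is precisely this construction: one needs the supremum over $t$ to be genuinely controlled so that left- and right-continuity of $\rS(a,\cdot)$ are teased apart. I would likely need two separate sub-constructions — one forcing $\rS(a,x_0)=\rS(a,x_0^-)$ (left-continuity) and one forcing $\rS(a,x_0)=\rS(a,x_0^+)$ (right-continuity) — each built from a suitable $\mu$ whose level values accumulate at $x_0$ from the appropriate side, thereby contradicting the assumed jump and establishing full continuity.
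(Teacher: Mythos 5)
Your treatment of associativity is correct and is in substance the paper's own argument: with $f=y\,\mI{A}$ and $\mu(A)=z$, monotonicity of $\rS$ in its first coordinate collapses the suprema to $\bIs(\mu,f)=\rS(y,z)$ and $\bIs\big(\mu,\rS(x,f)\big)=\rS\big(\rS(x,y),z\big)$, and equating the two sides of \eqref{r1} gives (C1); the attainment issue you flag is resolved exactly as you say, with no continuity assumption needed.

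The continuity half, however, contains a genuine gap: it is a plan rather than a proof. The entire content of that half is the exhibition of concrete pairs $(\mu,f)$ separating $\rS(a,x_0)$ from a one-sided limit, and you explicitly defer this construction (``the main obstacle\ldots I would likely need two separate sub-constructions''). What is missing is precisely what the paper supplies: take $X=[0,1]$ and the degenerate capacity $\mu(A)=0$ for $A\neq X$, for which $\bIs(\mu,g)=\inf_{x}g(x)$ for every $g$; choosing $f$ equal to $b_n$ on $[\tfrac{1}{n+1},\tfrac{1}{n})$ with $b_n\searrow b$ and $f(0)=f(1)=1$ (so the infimum $b$ is approached but not attained), equality \eqref{r1} reads $\lim_{n\to\infty}\rS(a,b_n)=\rS(a,b)$, which is right-continuity; dually, the capacity $\mu(A)=1$ for $A\neq\emptyset$ makes $\bIs(\mu,g)=\sup_x g(x)$, and the analogous $f$ with $b_n\nearrow b$ yields left-continuity. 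Until such pairs $(\mu,f)$ are written down, the contradiction you aim for never materializes. There is also a second, more substantive flaw in your framing: you place the putative jump at $x_0\in(0,1)$, but (C2) demands continuity on all of $[0,1]$, and endpoint discontinuities genuinely occur among associative semicopulas. The drastic product, $\rS_D(x,y)=x\wedge y$ if $x\vee y=1$ and $\rS_D(x,y)=0$ otherwise, is associative, and for $a\in(0,1)$ the selection $x\mapsto\rS_D(a,x)$ is discontinuous only at the endpoint $x=1$; your argument never engages it, yet it must be excluded (the paper's left-continuity construction with $b_n\nearrow 1$ does exclude it, since then $\rL=0\neq a=\rP$). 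So even a completed version of your interior-point construction would not establish (C2) in full.
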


\begin{proof}
The equality $\eqref{r1}$ is obvious for $a\in \lbrace 0,1\rbrace,$ so we assume that $a\in (0, 1).$
First, we show that the property $\eqref{r1}$ implies that $\rS$ is an~associative semicopula. Indeed,  put $f=b\mI{A}$ in $\eqref{r1},$ where $b\in [0,1]$ and $A\in\cA.$ Then $\eqref{r1}$ takes the form
\begin{align*}
\sup_{t\in [0,1]}\rS\Big(t,\mu\big(A\cap\lbrace \rS(a,b)\ge t\rbrace \big)\Big)=\rS\Big(a,\sup_{t\in [0,1]} \rS\big(t,\mu\big(A\cap \lbrace b\ge t\rbrace \big)\big)\Big).
\end{align*}
Clearly, $\lbrace\rS(a,b)\ge t\rbrace=X$ and $\lbrace  b\ge t\rbrace=X$  for $t\in [0,\rS(a,b)]$ and  $t\in [0,b]$, respectively, and both the sets are empty otherwise.  Hence
\begin{align*}
\sup_{t\in [0,\rS(a,b)]}\rS\big(t,\mu(A )\big)=\rS\Big(a,\sup_{t\in [0,b]} \rS\big(t,\mu (A)\big)\Big).
\end{align*}
Since $\rS$ is non-decreasing, we get
\begin{align}\label{a1}
\rS\big(\rS(a,b),c\big)=\rS\big(a,\rS(b,c)\big)
\end{align}
for all $a\in (0,1)$ and $b,c\in [0,1].$ The equality $\eqref{a1}$ also holds for $a\in\lbrace 0,1\rbrace,$  so $\rS$ is associative.

Second, we prove that from  $\eqref{r1}$ it follows that the condition $(C2)$ holds or, equivalently, that  the following conditions are satisfied:
 \begin{enumerate}[leftmargin=1.4cm]
\item[(C2a)] $x\mapsto \rS(a,x)$ is right-continuous for each $a\in (0,1)$, 
\item[(C2b)]  $x\mapsto \rS(a,x)$ is left-continuous for each $a\in (0,1)$.
\end{enumerate}
Denote by $\rL$ and $\rP$  the left-hand side and the right-hand side of  equation $\eqref{r1},$ respectively. Let $X=[0,1]$.
Putting $\mu(A)=0$ for $A\neq X$, we get
\begin{align*}
 \rL&=\sup_{t\in[0,\,\inf_x \rS(a,f(x))]} \rS(t,1)=\inf_{x\in [0,1]} \rS\big(a,f(x)\big),\\
 \rP&=\rS\big(a,\,\sup_{z\in [0,\,\inf_{x}f(x)]} \rS(z,1)\big)=\rS(a,\inf_{x\in [0,1]} f(x)\big).
 \end{align*} 
Let $b_n\searrow b$ for a~fixed $b\in [0,1)$ and $f(x)=b_n\mI{[\frac{1}{n+1},\frac{1}{n})}(x)$ for $x\in (0,1)$ with $f(0)=f(1)=1,$ where $\mI{A}$ denotes the indicator of  $A.$ Hereafter,  $a_n\searrow a$ means that $\lim_{n\to\infty} a_n=a$ and $a_n>a_{n+1}$ for all $n.$ Since $\rL=\rP$, $\rP=\rS(a,b)$ and
\begin{align*}
\rL&=\inf_{x\in [0,1]} \rS\big(a,f(x)\big)=\lim_{n\to \infty} \rS(a,b_n),
\end{align*}
the condition $(C2a)$ holds. Now we show that $(C2b)$ is fulfilled.  Set 
$\mu(A)=1$ for all $A\neq \emptyset .$ Obviously,
\begin{align*}
 \rL=\sup_{t\in[0,\,\sup_{x}\, \rS(a,f(x))]} \rS(t,1)=\sup_{x\in [0,1]} \rS\big(a,f(x)\big),\quad\quad
 \rP=\rS\big(a,\sup_{x\in [0,1]} f(x)\big).
 \end{align*} 
Let $b_n\nearrow b$ for some $b\in (0,1],$ $f(x)=b_n\mI{[\frac{1}{n+1},\frac{1}{n})}(x)$ for 
$x\in (0,1)$ and $f(0)=f(1)=0.$  Since $\rL=\rP$, $\rP=\rS(a,b)$ and
$\rL=\sup_{x\in [0,1]} \rS\big(a,f(x)\big)=\lim_{n\to \infty} \rS(a,b_n),
$
we obtain the condition $(C2b).$
\end{proof}

Next, we show that under an additional assumption on $\rS$, the condition $\rS\in \fS_0$ is necessary and sufficient for 
\eqref{r1} to hold.  

\begin{tw}\label{tw11} Suppose that for each $a\in (0,1)$  function $x\mapsto \rS(a,x)$ is increasing on some countable number of intervals. 
Then the equality $\eqref{r1}$
holds true for all $(X, \cA)\in \cS$ and all $(\mu, f)\in\cM_{(X,\cA)}\times\cF_{(X,\cA)}$  if and only if  $\rS\in \fS_0.$
\end{tw}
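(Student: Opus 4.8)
The necessity in the biconditional is nothing but Theorem~\ref{tw1}: if \eqref{r1} holds then $\rS\in\fS_0$, and the structural hypothesis is not even used for this. Hence the whole substance of the statement is the sufficiency, and the plan is to prove: if $\rS\in\fS_0$ (so $\rS$ is associative and every selection $x\mapsto\rS(a,x)$ is continuous), then \eqref{r1} holds. As in the proof of Theorem~\ref{tw1} the cases $a\in\{0,1\}$ are trivial, so I fix $a\in(0,1)$ and abbreviate $g_a(x):=\rS(a,x)$; this $g_a$ is continuous and non-decreasing, with $g_a(0)=0$ and $g_a(1)=a$. I would then compute both sides of \eqref{r1} and show they coincide.

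First I would rewrite the right-hand side. A continuous non-decreasing $g_a$ commutes with suprema, i.e. $g_a(\sup E)=\sup_{e\in E}g_a(e)$ for every $E\subseteq[0,1]$ (monotonicity gives ``$\ge$'', and a sequence $e_n\to\sup E$ together with continuity gives ``$\le$''). Applying this to $E=\{\rS(s,\mu(\{f\ge s\}))\colon s\in[0,1]\}$ and then using associativity, I obtain
\begin{align*}
\rS\big(a,\bIs(\mu,f)\big)=\sup_{s\in[0,1]}\rS\big(a,\rS(s,\mu(\{f\ge s\}))\big)=\sup_{s\in[0,1]}\rS\big(\rS(a,s),\mu(\{f\ge s\})\big).
\end{align*}

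Next I would analyse the left-hand side through its level sets. For $t\in(0,a]$ set $c_t:=\min\{y\in[0,1]\colon \rS(a,y)\ge t\}$; the minimum exists because $g_a$ is continuous (the level set is closed) and non-decreasing (it is an up-set, hence equal to $[c_t,1]$). Continuity forces $\rS(a,c_t)=t$: one has $g_a(c_t)\ge t$ since $c_t$ lies in the closed level set, while $g_a(y)<t$ for every $y<c_t$ yields $g_a(c_t)\le t$ by left continuity. Consequently $\{\rS(a,f)\ge t\}=\{f\ge c_t\}$ pointwise, so $\mu(\{\rS(a,f)\ge t\})=\mu(\{f\ge c_t\})$. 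Since $\rS(a,f)\le a$, the terms with $t>a$ vanish and the term $t=0$ contributes $\rS(0,\cdot)=0$, whence, writing $t=\rS(a,c_t)$,
\begin{align*}
\bIs\big(\mu,\rS(a,f)\big)=\sup_{t\in(0,a]}\rS\big(t,\mu(\{f\ge c_t\})\big)=\sup_{t\in(0,a]}\rS\big(\rS(a,c_t),\mu(\{f\ge c_t\})\big).
\end{align*}

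It remains to match the two suprema, and this is where I expect the only real difficulty. The inequality $\bIs(\mu,\rS(a,f))\le\rS(a,\bIs(\mu,f))$ is immediate, since $s=c_t$ is an admissible value on the right. For the reverse I would fix $s\in[0,1]$ and put $t^{*}:=\rS(a,s)$; if $t^{*}=0$ the corresponding summand is $\rS(0,\cdot)=0$, while if $t^{*}>0$ then $c_{t^{*}}\le s$, so the plateau $\{y\colon \rS(a,y)=t^{*}\}$ collapses to its left endpoint $c_{t^{*}}$. Monotonicity of $\mu$ along the decreasing family $s\mapsto\{f\ge s\}$ then gives $\mu(\{f\ge c_{t^{*}}\})\ge\mu(\{f\ge s\})$, while $\rS(a,c_{t^{*}})=t^{*}=\rS(a,s)$; hence the $s$-term on the right is dominated by the $c_{t^{*}}$-term occurring on the left, so $\rS(a,\bIs(\mu,f))\le\bIs(\mu,\rS(a,f))$ and equality follows. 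The assumption that each $x\mapsto\rS(a,x)$ is increasing on countably many intervals is exactly what renders these plateaus and their left endpoints transparent, although for the supremum comparison itself continuity is the property actually being used.
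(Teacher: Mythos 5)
Your proposal is correct, and its core is genuinely different from the paper's argument. The first step is shared: both you and the paper use continuity of $x\mapsto\rS(a,x)$ to commute it with suprema (this is the paper's Lemma~\ref{l1}) and then associativity, rewriting the right-hand side as
\begin{align*}
\rP=\sup_{s\in[0,1]}\rS\Big(\rS(a,s),\mu\big(\{f\ge s\}\big)\Big).
\end{align*}
From here the paper leans on the structural hypothesis: it writes $\rS(a,\cdot)$ as a countable alternation of constancy intervals and increasing pieces $g_k\colon[z_{2k},z_{2k+1}]\to[t_k,t_{k+1}]$ with $\bigcup_k[t_k,t_{k+1}]=[0,a]$, absorbs the constant pieces into adjacent terms, and changes variables $t=\rS(a,z)$ on each increasing piece to reassemble $\sup_{t\in[0,a]}\rS\big(t,\mu(\{\rS(a,f)\ge t\})\big)=\rL$. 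You instead introduce the generalized inverse $c_t=\min\{y\colon\rS(a,y)\ge t\}$, deduce from continuity and monotonicity alone that $\rS(a,c_t)=t$ and that $\{\rS(a,f)\ge t\}=\{f\ge c_t\}$, and then prove $\rL=\rP$ by two-sided termwise domination (each $\rL$-term at $t$ equals the $\rP$-term at $s=c_t$; each $\rP$-term at $s$ is dominated by the $\rL$-term at $t^{*}=\rS(a,s)$). This buys something real: your argument never uses the assumption that $x\mapsto\rS(a,x)$ is increasing on countably many intervals, so it establishes the stronger statement that \eqref{r1} holds for \emph{every} $\rS\in\fS_0$ --- including semicopulas whose selections are of Cantor-function type, which fall outside the theorem's hypothesis and which the paper's piecewise decomposition cannot handle. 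Combined with Theorem~\ref{tw1}, your proof therefore yields the unconditional equivalence: \eqref{r1} holds for all spaces, capacities and functions if and only if $\rS\in\fS_0$. What the paper's route offers in exchange is mainly that its explicit plateau bookkeeping is the template reused in Theorem~\ref{tw2}, where discontinuities genuinely matter. Two trivial tightenings for your write-up: note that the level set defining $c_t$ is nonempty because $t\le a=\rS(a,1)$, and your closing sentence undersells your own argument --- the countable-intervals assumption is not ``rendering the plateaus transparent'', it is simply never used.
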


To  proof  this result  we need the following lemma.

  \begin{lemma}\label{l1}
Suppose $g,h\colon [0,1]\to [0,1]$ and  $g$ is non-decreasing. 
\begin{itemize}
\item[(a)] If $g$ is right-continuous, then $g\big(\inf_{x\in [0,1]} h(x)\big)=\inf_{x\in [0,1]} g\big(h(x)\big).$
\item[(b)] If $g$ is left-continuous, then $g\big(\sup_{x\in [0,1]} h(x)\big)=\sup_{x\in [0,1]} g\big(h(x)\big).$
\end{itemize}
  \end{lemma}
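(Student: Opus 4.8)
The plan is to prove each part by establishing the two inequalities separately: one direction is immediate from monotonicity of $g$, while the other uses the relevant one-sided continuity together with a suitable approximating sequence.

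For part (a), set $m:=\inf_{x\in[0,1]} h(x)$. Since $h(x)\ge m$ for every $x$ and $g$ is non-decreasing, we have $g(h(x))\ge g(m)$ for all $x$, hence $\inf_{x\in[0,1]} g(h(x))\ge g(m)$. For the reverse inequality I would choose a sequence $(x_n)$ with $h(x_n)\to m$ and $h(x_n)\ge m$ for all $n$; such a sequence exists because $m$ is the infimum of the values of $h$. As $g$ is non-decreasing and right-continuous, it follows that $g(h(x_n))\to g(m)$. Since $\inf_{x\in[0,1]} g(h(x))\le g(h(x_n))$ for each $n$, passing to the limit gives $\inf_{x\in[0,1]} g(h(x))\le g(m)$, and combining the two inequalities yields the claimed equality. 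For part (b) I would argue symmetrically with $M:=\sup_{x\in[0,1]} h(x)$: monotonicity gives $\sup_{x\in[0,1]} g(h(x))\le g(M)$, while choosing $(x_n)$ with $h(x_n)\to M$ and $h(x_n)\le M$ together with left-continuity of $g$ gives $g(h(x_n))\to g(M)$, so that $g(M)\le \sup_{x\in[0,1]} g(h(x))$, and the two sides coincide.

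The computation is routine, so there is no genuinely hard step here. The only point requiring care is that the infimum (respectively the supremum) of $h$ need not be attained, so one cannot simply substitute a minimizing (respectively maximizing) argument. This is exactly where the one-sided continuity enters: right-continuity guarantees that $g$ converges to $g(m)$ along a sequence approaching $m$ \emph{from above}, and left-continuity guarantees convergence to $g(M)$ along a sequence approaching $M$ \emph{from below}. These are precisely the one-sided limits that monotonicity alone would otherwise leave as a possible gap, which explains why part (a) needs right-continuity and part (b) needs left-continuity.
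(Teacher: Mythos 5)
Your proof is correct and follows essentially the same route as the paper: monotonicity of $g$ gives one inequality, and a sequence with $h(x_n)$ approaching the infimum (resp.\ supremum) combined with right- (resp.\ left-) continuity gives the other. The only difference is cosmetic: you allow $h(x_n)\ge m$ rather than insisting on a strictly decreasing sequence $h(x_n)\searrow m$ as the paper does, which in fact handles the case where the infimum is attained slightly more cleanly.
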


\begin{proof}
\begin{itemize}
\item[(a)] Observe that  $\inf_{x\in [0,1]} g\big(h(x)\big)\ge g\big(\inf_{x\in[0,1]} h(x)\big).$ Let $(x_n)\subset [0,1]$ be such that $h(x_n)\searrow \inf_{x\in [0,1]} h(x).$  
From the right-continuity of  $g,$ we have
\begin{align*}
g\big(\inf_{x\in [0,1]}h(x)\big)=g\big(\lim_{n\to \infty} h(x_n)\big)=\lim_{n\to\infty} g\big(h(x_n)\big)\ge \inf_{x\in [0,1]} g\big(h(x)\big).
\end{align*}
Hence $g\big(\inf_{x\in [0,1]} h(x)\big)=\inf_{x\in [0,1]} g\big(h(x)\big).$

\item[(b)] Clearly, $\sup_{x\in [0,1]} g\big(h(x)\big)\le g\big(\sup_{x\in[0,1]} h(x)\big).$ Let $h(x_n)\nearrow \sup_{x\in [0,1]} h(x).$ Due to the left-continuity of  $g,$ 
\begin{align*}
g\big(\sup_{x\in [0,1]}h(x)\big)=g\big(\lim_{n\to \infty} h(x_n)\big)=\lim_{n\to\infty} g\big(h(x_n)\big)\le \sup_{x\in [0,1]} g\big(h(x)\big),
\end{align*}
thus $g\big(\sup_{x\in [0,1]} h(x)\big)=\sup_{x\in [0,1]} g\big(h(x)\big).$
\end{itemize}
\end{proof}

\begin{proof}[Proof of Theorem $\ref{tw11}$]
By $(C2)$ and Lemma $\ref{l1}\,(a)$ we get 
\begin{align*}
\rP&=\rS\Big(a,\sup_{z\in [0,1]} \rS\big(z,\mu\big(\lbrace f\ge z\rbrace \big)\big)\Big)=\sup_{z\in [0,1]} \rS\Big(a,\rS\big(z,\mu\big(\lbrace f\ge z\rbrace \big)\big)\Big).
\end{align*}
Applying $(C1)$ we obtain
\begin{align*}
\sup_{z\in [0,1]} \rS\Big(a,\rS\big(z,\mu\big(\lbrace f\ge z\rbrace \big)\big)\Big)=\sup_{z\in [0,1]} \rS\Big(\rS(a,z),\mu\big(\lbrace f\ge z\rbrace \big)\Big).
\end{align*}
In consequence, 
\begin{align}
\label{mm1}
\rP=\sup_{z\in [0,1]} \rS\Big(\rS(a,z),\mu\big(\lbrace f\ge z\rbrace \big)\Big). 
\end{align}
We prove first that $\rL=\rP$  if $x\mapsto \rS(a,x)$ is continuous and increasing in one interval,  that is, 
\[
 \rS(a,x) =
  \begin{cases}
   0, & \text{if } x\in [0,z_0] , \\
   g(x),      & \text{if } x \in [z_0,z_1],\\
   a,      & \text{if } x \in [z_1,1] ,\end{cases}\]
where $0\le z_0<z_1\le 1$ and $g\colon [z_0,z_1]\to [0,a]$ is an increasing and continuous function with $g(z_0)=0$ and $g(z_1)=a.$ Both $z_0$ and $z_1$ may depend on $a$. 
From $\eqref{mm1},$ we get
\begin{align*}
\rP&=\sup_{z\in [0,z_0]} \rS\Big(0,\mu\big(\lbrace f\ge z\rbrace \big)\Big)\vee \sup_{z\in [z_0,z_1]} \rS\Big(\rS(a,z),\mu\big(\lbrace f\ge z\rbrace \big)\Big)\vee\sup_{z\in [z_1,1]} \rS\Big(a,\mu\big(\lbrace f\ge z\rbrace \big)\Big)\\&=
 \sup_{z\in [z_0,z_1]} \rS\Big(\rS(a,z),\mu\big(\lbrace \rS(a,f)\ge \rS(a,z)\rbrace \big)\Big)\vee\sup_{z\in [z_1,1]} \rS\Big(a,\mu\big(\lbrace f\ge z\rbrace \big)\Big),
\end{align*}
since $\rS(0,y)=0$ for all $y.$ Observe that 
\begin{align*}
\sup_{z\in [z_1,1]} \rS\Big(a,\mu\big(\lbrace f\ge z\rbrace \big)\Big)&=\rS\Big(a,\mu\big(\lbrace f\ge z_1\rbrace \big)\Big)= \rS\Big(a,\mu\big(\lbrace \rS(a,f)\ge \rS(a,z_1)\rbrace \big)\Big)\\&=\rS\Big(\rS(a,z_1),\mu\big(\lbrace \rS(a,f)\ge \rS(a,z_1)\rbrace \big)\Big),
\end{align*}
as $\rS(a,z_1)=a.$ From  $\rS\big(a,[z_0,z_1]\big)=[0,a]$ we conclude that
\begin{align*}
\rP&=\sup_{z\in[z_0,z_1]} \rS\Big(\rS(a,z),\mu\big(\lbrace \rS(a,f)\ge \rS(a,z)\rbrace \big)\Big)\\&= \sup_{t\in [0,a]} \rS\Big(t,\mu\big(\lbrace \rS(a,f)\ge t\rbrace \big)\Big)=\rL.
\end{align*}
Now, we generalize the above case to a~countable number of intervals, i.e.
\[
 \rS(a,x) =
  \begin{cases}
  t_k, & \text{if } x\in [z_{2k-1},z_{2k}] , \\
  g_k(x),      & \text{if } x \in [z_{2k},z_{2k+1}],\\
   t_{k+1}, & \text{if } x\in [z_{2k+1},z_{2k+2}] , \\
     \end{cases}\]
   where $0=t_0\le t_1\le \ldots\le a$ with $\bigcup _{k\ge 0}[t_k,t_{k+1}]=[0,a]$,  $0=z_{-1}\le z_0< z_1\le \ldots\le 1$ and $g_k\colon [z_{2k},z_{2k+1}]\to [t_k,t_{k+1}]$ is an increasing function with $g_k(z_{2k})=t_k$ and $g_k(z_{2k+1})=t_{k+1}$ for each $k\ge 0.$ By $\eqref{mm1}$
   \begin{align*}
   \rP&=\sup_{k}\bigg[ \sup_{z\in [z_{2k},\,z_{2k+1}]} \rS\Big(\rS(a,z),\mu\big(\lbrace \rS(a,f)\ge \rS(a,z)\rbrace \big)\Big)\bigg].
   \end{align*}
Since
   \begin{align*}
    \sup_{z\in [z_{2k},\,z_{2k+1}]} \rS\Big(\rS(a,z),\mu\big(\lbrace \rS(a,f)\ge \rS(a,z)\rbrace \big)\Big)=\sup _{t\in [t_k,t_{k+1}]} \rS\Big(t,\mu\big(\lbrace S(a,f)\ge t\rbrace \big)\Big),
   \end{align*}
we get
   \begin{align*}
   \rP=\sup_{t\in [0,a]} \rS\Big(t,\mu\big(\lbrace \rS(a,f)\ge t\rbrace \big)\Big)=\rL.
   \end{align*}
   The proof is complete.
\end{proof}

Now we show that if  capacity $\mu$ in  Problem 9.3 is  continuous from below, then  the equality in $\eqref{r1}$  may also hold  for discontinuous semicopulas 
$\rS.$ Recall that  $\mu$ is said to be {\it continuous from below} if  $\lim_{n\to\infty} \mu(A_n)=\mu\big(\bigcup_{k=1}^\infty A_k\big)$ for $A_n\subset A_{n+1}.$

Denote by $\fS_1$ the class of all associative semicopulas $\rS$ such that
for any $a\in (0,1),$ the function $x\mapsto \rS(a,x)$  is increasing and left-continuous and has only isolated 
discountinuity points, i.e. for any discontinuity point $z_k(a)$ there exists an interval $\big(z_k(a),z_k(a)+\varepsilon_k(a)\big)$ with $\varepsilon_k(a)>0,$ on which function $x\mapsto \rS(a,x)$ is continuous.

\begin{tw}\label{tw2}
If $\rS\in \fS_1,$ 
then the equality $\eqref{r1}$ holds true for all $(X,\cA)\in \cS,$ all $f\in \cF_{(X,\cA)}$
and all continuous from below capacities $\mu\in\cM_{(X,\cA)}.$ 
\end{tw}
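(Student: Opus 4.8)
The plan is to follow the proof of Theorem~\ref{tw11} but to trade right-continuity for left-continuity and to repair the change of variables at the jumps of $x\mapsto\rS(a,x)$ by means of the continuity from below of $\mu$. Throughout I fix $a\in(0,1)$ (equality \eqref{r1} being trivial for $a\in\{0,1\}$), write $g_a(x)=\rS(a,x)$, and let $\rL,\rP$ denote the two sides of \eqref{r1} as in the proof of Theorem~\ref{tw1}. Since $\rS\in\fS_1$, the map $g_a$ is non-decreasing and left-continuous, so Lemma~\ref{l1}\,(b) applies to it; running the same computation that produced \eqref{mm1}, now with Lemma~\ref{l1}\,(b) in place of Lemma~\ref{l1}\,(a) and then $(C1)$, gives
\begin{align*}
\rP=\sup_{z\in[0,1]}\rS\big(\rS(a,z),\mu(\{f\ge z\})\big).
\end{align*}
The inequality $\rP\le\rL$ is then immediate: for each $z$ set $t=\rS(a,z)$; monotonicity of $g_a$ yields $\{f\ge z\}\subseteq\{\rS(a,f)\ge t\}$, so the $z$-th summand of $\rP$ is at most the $t$-th summand of $\rL$. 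The whole content of the theorem is the reverse inequality $\rL\le\rP$.

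To establish $\rL\le\rP$ I would fix $t\in(0,a]$ (values $t>a$ give an empty level set and $t=0$ a vanishing term) and introduce the generalized inverse $w=w(t)=\inf\{y\in[0,1]:g_a(y)\ge t\}$, which is well defined since $g_a(1)=a\ge t$. As $\{y:g_a(y)\ge t\}$ is an up-set with infimum $w$, exactly two cases arise, according to whether the infimum is attained. If $g_a(w)\ge t$, then $\{\rS(a,f)\ge t\}=\{f\ge w\}$, and from $t\le g_a(w)$ together with the monotonicity of $\rS$ in its first coordinate,
\begin{align*}
\rS\big(t,\mu(\{\rS(a,f)\ge t\})\big)\le\rS\big(g_a(w),\mu(\{f\ge w\})\big)\le\rP,
\end{align*}
the last term being the $z=w$ summand of $\rP$.

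The delicate case is $g_a(w)<t$, which forces $w$ to be a (necessarily isolated) jump point with $g_a(w)<t\le g_a(w^{+})$ and $w<1$, and which gives $\{\rS(a,f)\ge t\}=\{f>w\}$. Here the substitution $t=\rS(a,z)$ of Theorem~\ref{tw11} breaks down because $t$ falls into a gap of the range of $g_a$. I would instead approximate the strict superlevel set from inside, $\{f>w\}=\bigcup_{n}\{f\ge w+\tfrac1n\}$, and invoke continuity from below of $\mu$ to get $\mu(\{f>w\})=\lim_n\mu(\{f\ge w+\tfrac1n\})$. Since $t\le a<1$, the $\fS_1$ hypothesis applied with parameter $t$ shows that $y\mapsto\rS(t,y)$ is itself left-continuous, so the limit passes through it:
\begin{align*}
\rS\big(t,\mu(\{f>w\})\big)=\lim_{n\to\infty}\rS\big(t,\mu(\{f\ge w+\tfrac1n\})\big).
\end{align*}
Finally, every $y>w$ satisfies $g_a(y)\ge t$, so in particular $g_a(w+\tfrac1n)\ge t$ and each term on the right is dominated by $\rS\big(g_a(w+\tfrac1n),\mu(\{f\ge w+\tfrac1n\})\big)\le\rP$. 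Thus $\rS(t,\mu(\{\rS(a,f)\ge t\}))\le\rP$ in both cases, and taking the supremum over $t\in(0,a]$ yields $\rL\le\rP$, hence $\rL=\rP$.

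The main obstacle is exactly this second case: the jumps of $x\mapsto\rS(a,x)$ leave holes in its range, so the clean bijective substitution available for continuous selections is lost and one must control a \emph{strict} superlevel set $\{f>w\}$ instead of a closed one. This is the single point at which the hypothesis that $\mu$ be continuous from below is indispensable — it is precisely what transfers the inner approximation $\{f\ge w+\frac1n\}\uparrow\{f>w\}$ to the measures, after which left-continuity of $\rS$ in each argument closes the estimate.
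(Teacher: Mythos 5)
Your proposal is correct, but it takes a genuinely different route from the paper's proof. The paper fixes a jump point $z$ of $x\mapsto\rS(a,x)$, splits the domain of the supremum defining $\rL$ according to the decomposition $[0,a]=\rS\big(a,[0,1]\big)\cup B$ with $B=(t_1,t_2]$ the gap in the range, recovers $\rP$ on the range part by the reparametrization $t=\rS(a,z)$ together with associativity and Lemma~\ref{l1}\,(b) (a step that relies on the \emph{strict} increase of the section to get $\lbrace \rS(a,f)\ge \rS(a,z)\rbrace=\lbrace f\ge z\rbrace$), and then disposes of the gap term $\rS\big(t_2,\mu(\lbrace \rS(a,f)\ge t_2\rbrace)\big)$ by approximating $t_2$ \emph{from above inside the range} with a sequence $u_n=\rS(a,z_n)\searrow t_2$, using continuity from below of $\mu$; moreover, the paper carries this out explicitly only for a single discontinuity and asserts that the case of many isolated jumps proceeds in much the same way. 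You instead prove the two inequalities separately: $\rP\le\rL$ falls out of the rewritten form of $\rP$ (your display, which is the paper's \eqref{mm1}) by the inclusion $\lbrace f\ge z\rbrace\subseteq\lbrace \rS(a,f)\ge \rS(a,z)\rbrace$, and $\rL\le\rP$ is proved \emph{termwise} via the generalized inverse $w(t)$, with continuity from below of $\mu$ entering exactly in the case where the superlevel set $\lbrace y\colon \rS(a,y)\ge t\rbrace$ fails to contain its infimum---i.e., when $t$ lies in a gap---through the inner approximation $\lbrace f\ge w+\tfrac1n\rbrace\uparrow\lbrace f>w\rbrace$ and the left-continuity of $y\mapsto\rS(t,y)$. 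The two uses of continuity from below are dual (the paper approaches a gap value from above in the range; you approach the strict superlevel set from inside the domain), but your argument buys uniformity and generality: it handles all discontinuities at once with no induction or appeal to similarity, it never needs the set identity that forces strict increase (one-sided inclusions suffice), and it nowhere uses that the jumps are isolated, so in fact it establishes the conclusion for the larger class of associative semicopulas whose sections $x\mapsto\rS(a,x)$, $a\in(0,1)$, are merely left-continuous---a strictly stronger statement than Theorem~\ref{tw2}, and one that also fills in the generalization the paper only sketches.
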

\begin{proof}
First, we assume  that  $x\mapsto \rS(a,x)$ has only one point of discontinuity, say $z=z(a),$ for each $a$.  Set $t_1=\rS\big(a,z\big)$ and $t_2=\lim_{x\searrow z}\rS\big(a,x\big).$ Clearly, $[0,a]=\rS\big(a,[0,1]\big)\cup B,$ where $B=\big(t_1,t_2\big].$ Hence 
\begin{align}\label{n1}
\rL&=\sup_{t\in [0,a]} \rS\Big(t,\mu\big(\lbrace \rS(a,f)\ge t\rbrace \big)\Big)\nonumber\\&=\sup_{t\in \rS(a,[0,1])} \rS\Big(t, \mu\big(\lbrace \rS(a,f)\ge t\rbrace \big)\Big)\vee \sup_{t\in B} \rS\Big(t, \mu\big(\lbrace \rS(a,f)\ge t\rbrace \big)\Big).
\end{align}
Observe that for all $t\in B$
\begin{align*}
\left\{ x\colon \rS\big(a,f(x)\big)\ge t\right\}=\left\{ x\colon \rS\big(a,f(x)\big)\ge t_2\right\}.
\end{align*}
Therefore
\begin{align}\label{n1aa}
\sup_{t\in B} \rS\Big(t,\mu\big(\lbrace \rS(a,f)\ge t\rbrace \big)\Big)&=\sup_{t\in B} \rS\Big(t,\mu\big(\lbrace \rS(a,f)\ge t_2\rbrace \big)\Big)\nonumber\\&=\rS\Big(t_2,\mu\big(\lbrace \rS(a,f)\ge t_2\rbrace \big)\Big).
\end{align}
Combining $\eqref{n1}$ and $\eqref{n1aa}$ yields
\begin{align}\label{n1a}
\rL&=\sup_{t\in \rS(a,[0,1])} \rS\Big(t,\mu\big(\lbrace \rS(a,f)\ge t\rbrace \big)\Big)\vee \rS\Big(t_2,\mu\big(\lbrace \rS(a,f)\ge t_2\rbrace \big)\Big).
\end{align}
Furthermore
\begin{align}\label{n1ab}
\sup_{t\in \rS(a,[0,1])} \rS\Big(t,\mu\big(\lbrace \rS(a,f)\ge t\rbrace \big)\Big)=\sup_{z\in [0,1]} \rS\Big(\rS(a,z),\mu\big(\lbrace \rS(a,f)\ge \rS(a,z)\rbrace \big)\Big).
\end{align}
From  the associativity of  $\rS$ and Lemma $\ref{l1}\,(b),$ we get
\begin{align}\label{n1b}
\sup_{z\in [0,1]} \rS\Big(\rS(a,z),\mu\big(\lbrace \rS(a,f)\ge \rS(a,z)\rbrace \big)\Big)&= \rS\Big(a,\sup_{z\in [0,1]} \rS\big(z,\mu\big(\lbrace f\ge z\rbrace \big)\big)\Big)=\rP.
\end{align}
Summing up, from $\eqref{n1a}$ and $\eqref{n1b}$ we obtain 
\begin{align*}
\rL=\rP\vee\rS\Big(t_2,\mu\big(\lbrace \rS(a,f)\ge t_2\rbrace \big)\Big).
\end{align*}
It remains to prove that $\rS\Big(t_2,\mu\big(\lbrace \rS(a,f)\ge t_2\rbrace \big)\Big)\le \rP.$  Since $z<1,$  
there exists a sequence $(u_n)$ such that $u_n\searrow t_2$ and $u_n= \rS(a,z_n)$ for a sequence $(z_n)$ such that $z_n>z$ for all $n.$ Observe that 
\begin{align*}
\liminf_{n\to \infty} \rS\Big(u_n,\mu\big(\lbrace \rS(a,f)\ge u_n\rbrace \big)\Big)&\ge \liminf_{n\to \infty} \rS\Big(t_2,\mu\big(\lbrace \rS(a,f)\ge u_n\rbrace \big)\Big)\\&= \lim_{n\to \infty} \rS\Big(t_2,\mu\big(\lbrace \rS(a,f)\ge u_n\rbrace \big)\Big),
\end{align*}
as $u_n>t_2$ and the sequence $\big(\mu\big(\lbrace \rS(a,f)\ge u_n\rbrace \big)\big)_n$ is non-decreasing.  From the left-continuity of $x\mapsto \rS(a,x)$ and the continuity from below of $\mu,$ we get   
\begin{align}\label{n2}
\liminf_{n\to \infty} \rS\Big(u_n,\mu\big(\lbrace \rS(a,f)\ge u_n\rbrace \big)\Big)\ge\rS\Big(t_2,\mu\big(\lbrace \rS(a,f)\ge t_2\rbrace \big)\Big).
\end{align}
Since $u_n\in \rS\big(a,[0,1]\big),$ we have
\begin{align}\label{n2a}
\liminf_{n\to \infty} \rS\Big(u_n,\mu\big(\lbrace \rS(a,f)\ge u_n\rbrace \big)\Big)\le \sup_{t\in \rS(a,[0,1])} \rS\Big(t,\mu\big(\lbrace \rS(a,f)\ge t\rbrace \big)\Big).
\end{align}
Combining $\eqref{n2}$ and $\eqref{n2a}$ with $\eqref{n1ab}$ and $\eqref{n1b}$ gives 
\begin{align*}
\rS\Big(t_2,\mu\big(\lbrace \rS(a,f)\ge t_2\rbrace \big)\Big)\le \sup_{t\in \rS(a,[0,1])} \rS\Big(t,\mu\big(\lbrace \rS(a,f)\ge t\rbrace \big)\Big)=\rP.
\end{align*}
Thus, we have  shown that if there exists only one point of discontinuity, then $\rL=\rP.$ The proof for the case of many isolated points proceeds in much the same way as above.  
\end{proof}


\end{document}